\numberwithin{equation}{section}
\begin{document}

\title{\textbf{Nonlinear Fractal Histopolation Function}}

\author*[1]{\fnm{Aiswarya} \sur{T}}\email{aiswaryasidhu8113@gmail.com}

\author[1]{\fnm{Srijanani Anurag} \sur{Prasad}}\email{srijanani@iittp.ac.in}

\affil*[1]{\orgdiv{Department of Mathematics and Statistics}, \orgname{Indian Institute of Technology Tirupati}, \orgaddress{\street{Yerpedu P O}, \city{Tirupati}, \postcode{517619}, \state{Andhra Pradesh}, \country{India}}}
\date{}

\newtheorem{proposition}{Proposition}
\newtheorem{theorem}{Theorem}
\newtheorem{definition}{Definition}
\newtheorem{example}{Example}
\newtheorem{remark}{Remark}
\newtheorem{corollary}{Corollary}
\abstract{A novel method for constructing a nonlinear fractal histopolation function associated with a given histogram is introduced in this paper. In contrast to classical fractal interpolation methods, which produce continuous and interpolatory functions, the proposed approach constructs a bounded, Riemann integrable function that is not necessarily continuous but preserves the area of a given histogram. An iterated function system based on Rakotch contractions- a generalisation of Banach contractions- is utilised, thereby extending the theoretical framework for fractal histopolation. Unlike previous formulations, the proposed construction of nonlinear fractal functions allows vertical scaling factors greater than one. The conditions for the nonlinear fractal function to be a solution for the histopolation problem are derived.}

\maketitle

\textbf{Keywords}: Histopolation, Iterated Function Systems, Rakotch Contraction, Fractal Interpolation Function\\
\textbf{MSC Classification}: 28A80, 41A30, 37L30

\section{Introduction} 
Approximation theory is a branch of mathematics exploring methods to represent complex functions or data with more straightforward and manageable models. It aims to find efficient ways to approximate functions or datasets accurately using simpler mathematical constructs. Fractal approximation techniques provide a methodology to precisely approximate functions that exhibit self-similarity and a non-smooth nature.
	 
Interpolation is a commonly used method in approximation theory to find a continuous function whose graph coincides with a given data set. Histopolation is a method closely related to interpolation, but instead of approximating a function directly, histopolation preserves the area of the region below the function's graph. Unlike interpolation, the histopolating function is not necessarily continuous.

Fractal interpolation is a widely used approximation method with diverse practical applications. From image reconstruction to modelling of natural objects, fractal interpolation has proven its versatility. M.F. Barnsley introduced the fractal interpolation function(FIF) as a continuous function that exhibits self-similarity and interpolates a given data set\cite {barn1986}. The construction was based on the iterated function system(IFS) and the Banach contraction principle. Many researchers have worked on different aspects of fractal interpolation functions. Some of these studies constructed fractal interpolation functions of a more general nature, such as Hidden-variable FIFs~\cite{barnsley89_2}, Hermite FIFs~\cite{navas-sebas04}, Coalescence HFIFs~\cite{chand07}, Super FIFs~\cite{srijanani15} and Super CHFIFs~\cite{srijanani21}. There have been studies conducted on properties like smoothness~\cite{gang96}, regularity~\cite{ srijanani13_2}, approximation property~\cite{navascues03,srijanani14_1}, multiresolution analysis~\cite{hardin92,srijanani14_2}, reproducing kernel~\cite{bouboulis11,srijanani19}, node insertion~\cite{kocic03,srijanani13_1}, fractional calculus~\cite{pan14,srijanani17}, and dimension property~\cite{ruan09}. All the above constructions and studies had finite data points. Another extension from finiteness to countable data sets was introduced in~\cite{secelean2003fractal}. Using Rakotch contractions and Rakotch's fixed point theorem, nonlinear FIFs were introduced for finite\cite{ri2018new} and countable data sets\cite{pacurar2021countable}. 
	
All the functions constructed in the works above were continuous and interpolatory. Recently, fractal histopolation functions were constructed by M.F. Barnsley and P. Viswanathan, which differ from FIFs as they are neither continuous (necessarily) nor  interpolatory\cite{barnsley2023histopolating}. The construction was based on the Banach contraction principle.
	
This paper uses Rakotch's fixed point theorem to construct nonlinear fractal histopolation functions. This construction is significant as it generalises the fractal histopolation function. Here, the IFS under consideration will consist of a more significant class of functions, the Rakotch contractions, rather than the small class of Banach contractions. A novel method for incorporating vertical scaling factors greater than one into the construction is presented, thereby enhancing its generality.
	
The structure of the paper is as follows: Some definitions, notations and theorems are discussed in section~\ref{prelims}. A nonlinear fractal function is constructed in section~\ref{Nonlinear fractal functions} as the fixed point of a Read-Bajraktarevi\'{c} operator in the space of bounded functions on a closed and bounded interval. Conditions for the existence of the nonlinear fractal histopolation function as a solution for the given histopolation problem are found in section~\ref{NFHfunctions}.
\section{Preliminary Facts}\label{prelims}
This section contains notations, definitions and theorems necessary for the main result. Consider the metric space $(X,\rho)$.
\begin{definition}[Banach Contraction]
\cite{rudin1964principles} Let $g$ be a function on $X$. $g$ is a Banach contraction if there exists a non-negative number $k<1$ such that
\begin{equation}
    \nonumber \rho(g(x),g(y))\leq k\rho(x,y), \forall \ x,y\in X.
\end{equation}
\end{definition}
\noindent The class of $\psi$-contractions generalises the concept of a Banach contraction on a metric space.
\begin{definition}[$\psi$-Contraction]
\cite{strobin2015attractors} Let $\psi$ be a self map on $\mathbb{R}_{+}$. A function $f$ on $X$ is a $\psi$-contraction if
\begin{equation}
\rho(f(t),f(s))\leq \psi(\rho(s,t)), \quad \forall\ s,t\in X \nonumber.
\end{equation}
\end{definition}
\noindent One of the widely discussed $\psi$-contractions is the Rakotch contraction.
\begin{definition}[Rakotch Contraction]
\cite{ri2018new}Let f be a $\psi$-contraction. Suppose $\psi$ satisfies $\frac{\psi(s)}{s}<1$ and $\frac{\psi(t)}{t}\leq\frac{\psi(s)}{s},\forall \ t>s$. Then, the function $f$ is called a Rakotch contraction.
\end{definition}
\noindent The following examples illustrate that all Banach contractions are Rakotch contractions, but the converse is not true.
\begin{example}
    Let $(X,\rho)$ be a complete metric space and $f:X\rightarrow X$ be a Banach contraction such that $\rho(f(t),f(s))\leq k\rho(t,s), \forall~t,s\in X$ for some $k \in [0,1)$. Let $\psi(t)=kt$ for the same $k$. Then, $\rho(f(t),f(s))\leq \psi(\rho(t,s))$. Hence, $f$ is a Rakotch contraction.
\end{example}
\begin{example}
    Consider $X=[0,1]$ with the Euclidean metric and the function $h(t)=\frac{1}{1+t}, \forall \ t\in X$. $\psi(t)=\frac{t}{1+t}$. Then $\forall\ t,s\in X$,
    \begin{eqnarray*}
        |h(t)-h(s)|=|\frac{1}{1+t}-\frac{1}{1+s}|=|\frac{s-t}{(1+t)(1+s)}|\leq \psi(|t-s|).
    \end{eqnarray*}
    Thus $h$ is a Rakotch contraction. Note that the fixed point of $h$ is $\frac{\sqrt{5}-1}{2}$. Let\\ $k\in[0,1)$ be any fixed constant. Let $s$ be a positive real number such that $s<\min\left\{-1+\frac{1}{k},\frac{\sqrt{5}-1}{2}\right\}$. Then,
    $$|h(0)-h(s)|=|1-\frac{1}{1+s}|=|\frac{s}{1+s}|>ks=k|0-s|.$$
    Thus, $h$ is not a Banach contraction.
\end{example}
\noindent The existence of the fixed point of a Rakotch contraction is established in the following proposition.
\begin{proposition}\label{rfp}
\cite{drakopoulos2021generalised} Consider a Rakotch contraction $f$ on a complete metric space $(X,\rho)$. Then, for any $t\in X$,
\begin{equation}
\lim_{n\rightarrow\infty} f^n(t)=t_0\nonumber
\end{equation} 
and this point $t_0$ is the unique fixed point of f.
\end{proposition}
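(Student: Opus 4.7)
The plan is to follow the standard strategy for contractive fixed point results, adapted to the Rakotch setting. Fix $t \in X$ and set $a_n := \rho(f^n(t), f^{n+1}(t))$. From the $\psi$-contraction property, $a_{n+1} \leq \psi(a_n)$, and since $\psi(s) < s$ for all $s > 0$, the sequence $(a_n)$ is non-increasing, hence converges to some $L \geq 0$. To show $L = 0$, I would use that the hypothesis $\psi(t)/t \leq \psi(s)/s$ for $t > s$ makes $\phi(s) := \psi(s)/s$ a non-increasing function on $(0, \infty)$. If $L > 0$, then $a_n \geq L$ gives $\phi(a_n) \leq \phi(L) < 1$, so $a_{n+1} \leq \phi(L)\,a_n$, forcing $a_n \to 0$ and contradicting $L > 0$. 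Thus $a_n \to 0$.

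Next I would show that $\{f^n(t)\}_n$ is Cauchy by contradiction. If it were not, there would exist $\varepsilon > 0$ and indices $n_k < m_k$ with $d_k := \rho(f^{n_k}(t), f^{m_k}(t)) \geq \varepsilon$. Applying the triangle inequality together with the $\psi$-contraction property to the pair $(f^{n_k+1}(t), f^{m_k+1}(t))$ gives
\[
d_k \;\leq\; a_{n_k} + \psi(d_k) + a_{m_k} \;=\; a_{n_k} + \phi(d_k)\,d_k + a_{m_k},
\]
and since $d_k \geq \varepsilon$ implies $\phi(d_k) \leq \phi(\varepsilon) < 1$, rearranging yields
\[
\bigl(1 - \phi(\varepsilon)\bigr)\,d_k \;\leq\; a_{n_k} + a_{m_k} \;\longrightarrow\; 0,
\]
contradicting $d_k \geq \varepsilon$. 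By completeness, $f^n(t) \to t_0$ for some $t_0 \in X$. Continuity of $f$ (immediate from the $\psi$-contraction inequality, since $\psi(s) \leq s$) gives $f(t_0) = \lim_n f^{n+1}(t) = t_0$. For uniqueness, if $t_0'$ were another fixed point with $\rho(t_0, t_0') > 0$, then $\rho(t_0, t_0') = \rho(f(t_0), f(t_0')) \leq \psi(\rho(t_0, t_0')) < \rho(t_0, t_0')$, a contradiction.

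The main obstacle is the Cauchy step: unlike in the Banach case, there is no uniform contraction constant, and the local factor $\phi(a_n)$ may deteriorate toward $1$ as $a_n \to 0$, so the usual geometric-series bound $\rho(f^n(t), f^m(t)) \leq \sum_{k \geq n} a_k$ need not be summable. The key idea is to argue by contradiction and exploit the monotonicity of $\phi$ at the \emph{fixed positive} scale $\varepsilon$ rather than at the vanishing scales $a_n$, turning a would-be obstruction into a self-defeating inequality.
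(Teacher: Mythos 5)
The paper does not prove this proposition at all: it is quoted verbatim from the cited reference (Rakotch's fixed point theorem via \cite{drakopoulos2021generalised}), so there is no in-paper argument to compare against. Your proof is correct and self-contained, and it is essentially the standard argument for Rakotch/Boyd--Wong type contractions: monotone decay of the successive distances $a_n$ to a limit $L$, elimination of $L>0$ by using the monotonicity of $\phi(s)=\psi(s)/s$ at the scale $L$, and the Cauchy property by contradiction at a fixed scale $\varepsilon$. You correctly identify the one genuinely delicate point --- that no uniform contraction constant is available, so the geometric-series bound of the Banach proof fails --- and your fix (bounding $\phi(d_k)$ by $\phi(\varepsilon)$ rather than working at the vanishing scales $a_n$) is exactly the right one and uses only the hypotheses as stated in the paper's Definition 3 (no continuity or monotonicity of $\psi$ itself is needed). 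Two small points you should make explicit if you write this up: in the negation of the Cauchy property you must choose $n_k,m_k\geq k$ so that $a_{n_k}+a_{m_k}\to 0$, and in the first step you should note that $a_n=0$ forces $a_{n+1}=0$ (so monotonicity holds there too, without evaluating $\phi$ at $0$). Neither affects the validity of the argument.
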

\begin{definition}[Iterated Function System(IFS)]
    \cite{barnsley2014fractals}An iterated function system, $\mathcal{I}$, is defined as a complete metric space $X$ together with a finite number of contraction maps, $f_1,f_2,\ldots,f_N$ on $X$. This IFS is denoted as\\ $\mathcal{I}=\left\{X;f_j, j=1, 2,\ldots, N\right\}$.
\end{definition}
\begin{definition}[Attractor]
    \cite{barnsley2014fractals} A non-empty compact subset $A$ of $X$ is called an attractor for IFS ~$\mathcal{I}$ if
    \begin{equation*}
        A=\bigcup_{j=1}^N f_j(A).
    \end{equation*}
\end{definition}
\noindent The following proposition provides the existence of the attractor for an iterated function system consisting of Rakotch contractions.
\begin{proposition}
\cite{drakopoulos2021generalised} 
Consider the IFS $\left\{X;f_1, f_2, \dots, f_N\right\}$ on a complete metric space $X$ where each of the $f_j$ is a Rakotch contraction. Then, there exists a unique nonempty compact set $B$ such that
\begin{equation*}
B=\bigcup\limits_{j=1}^{N}f_j(B).
\end{equation*} 
\end{proposition}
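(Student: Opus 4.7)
The plan is to adapt Hutchinson's classical construction from Banach contractions to Rakotch contractions, and then invoke Proposition~\ref{rfp}. First I would pass to the hyperspace $\mathcal{H}(X)$ of nonempty compact subsets of $X$ equipped with the Hausdorff metric $h$; since $(X,\rho)$ is complete, a standard result gives that $(\mathcal{H}(X),h)$ is also complete. The natural candidate operator is the Hutchinson map $F:\mathcal{H}(X)\to\mathcal{H}(X)$ defined by $F(A)=\bigcup_{j=1}^{N} f_j(A)$. This is well-defined because each $f_j$ is a Rakotch contraction and therefore continuous, so $f_j(A)$ is compact whenever $A$ is, and a finite union of compact sets is compact.

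The central step is to show that $F$ itself is a Rakotch contraction on $(\mathcal{H}(X),h)$. For each individual map I would first establish the single-map estimate $h(f_j(A),f_j(B))\le \psi_j(h(A,B))$, using the characterisation of the Hausdorff distance (every point of $f_j(A)$ is within $\rho$-distance $\psi_j(h(A,B))$ of some point of $f_j(B)$, and vice versa) together with the monotonicity of $\psi_j$ implicit in its definition. Combining the maps through the standard inequality
\begin{equation*}
h\!\left(\bigcup_{j=1}^{N} U_j,\ \bigcup_{j=1}^{N} V_j\right)\le \max_{1\le j\le N} h(U_j,V_j)
\end{equation*}
then gives $h(F(A),F(B))\le \max_{1\le j\le N}\psi_j(h(A,B))$. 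Setting $\psi(t):=\max_{1\le j\le N}\psi_j(t)$, I would verify that $\psi(t)/t<1$ and that $\psi(t)/t$ remains nonincreasing in $t$, so that $F$ qualifies as a Rakotch contraction with comparison function $\psi$ on the complete metric space $(\mathcal{H}(X),h)$.

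Once this is in place, Proposition~\ref{rfp} applies to $F$ and yields a unique nonempty compact $B\in\mathcal{H}(X)$ satisfying $F(B)=B$, which is exactly the required attractor; the iterative limit statement of that proposition also provides a concrete way to approximate $B$ starting from any initial compact set.

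The main obstacle I anticipate is the clean verification that $\psi(t)=\max_j\psi_j(t)$ still satisfies the Rakotch conditions, in particular that the pointwise maximum of finitely many functions whose ratios $\psi_j(t)/t$ are nonincreasing inherits this property. This follows from a short argument at each pair $t>s$: for every $j$, $\psi_j(t)/t\le \psi_j(s)/s\le \max_k \psi_k(s)/s$, so taking the maximum over $j$ on the left preserves the inequality, and the strict bound $\psi(t)/t<1$ is immediate from finiteness of $N$. With this lemma in hand, the rest of the argument reduces to bookkeeping.
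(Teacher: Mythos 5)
The paper states this proposition as a result imported from the cited reference and offers no proof of its own, so there is no internal argument to compare against; your Hutchinson-lifting argument (pass to $(\mathcal{H}(X),h)$, show that $F(A)=\bigcup_{j=1}^{N}f_j(A)$ is itself a Rakotch contraction, then invoke Proposition~\ref{rfp}) is precisely the standard proof of this statement, and its overall structure is sound. Your closing lemma --- that $\psi(t)=\max_{j}\psi_j(t)$ inherits the properties $\psi(t)/t<1$ and $\psi(t)/t$ nonincreasing from the finitely many $\psi_j$ --- is argued correctly, and the union inequality for the Hausdorff metric is the right tool for combining the maps.

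One step deserves more care than you give it. The single-map estimate $h(f_j(A),f_j(B))\le\psi_j(h(A,B))$ rests on the implication ``$\rho(a,b)\le h(A,B)$ implies $\psi_j(\rho(a,b))\le\psi_j(h(A,B))$'', i.e.\ on $\psi_j$ being nondecreasing. You describe this monotonicity as ``implicit in its definition'', but the definition of a Rakotch contraction given in Section~\ref{prelims} only requires that $\psi(t)/t$ be nonincreasing and strictly less than $1$; this does not force $\psi$ itself to be monotone (for instance $\psi(t)=te^{-t}$ meets the stated conditions yet decreases for $t>1$, so $\psi(\rho(a,b))$ can exceed $\psi(h(A,B))$ when $\rho(a,b)<h(A,B)$). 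The gap is easily repaired: either add the hypothesis that each $\psi_j$ is nondecreasing --- which is standard in this literature, and which the paper itself silently assumes inside the proof of Theorem~\ref{Rbfpthm} --- or replace $\psi_j$ by $\hat\psi_j(t):=\sup_{0<s\le t}\psi_j(s)$, which is nondecreasing, still dominates $\rho(f_j(x),f_j(y))$ for $\rho(x,y)=t$, and can be checked (using the monotonicity of $\psi_j(s)/s$) to retain both $\hat\psi_j(t)/t<1$ and the nonincreasing character of $\hat\psi_j(t)/t$. With that adjustment the rest of your argument goes through as written.
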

\section{Nonlinear Fractal Functions}\label{Nonlinear fractal functions}

Let $N>1$ and $\mathbb{N}_N=\left\{1,2,\ldots, N\right\}$. Consider the interval $I=[t_0,t_N]\subset\mathbb{R}$. Let $\left\{t_0,t_1,\dots ,t_N\right\}\subset I$ with $t_{j-1}<t_j$, for $j\in \mathbb{N}_N$. Let $I_j=[t_{j-1},t_j)$ for $j\in \mathbb 
{N}_{N-1}$ and $I_N=[t_{N-1},t_N]$. Let $l_j:I\rightarrow I$ be the contractive maps 
\begin{equation}
l_j(t)=a_jt+b_j  \label{l_j}
\end{equation}
satisfying $l_j(t_0)=t_{j-1} $ and $l_j(t_N)=t_j$ for all $j\in\mathbb N_N$.

\noindent Set $A=I \times \mathbb{R}$. Define $F_j:A\rightarrow\mathbb{R}$ as
\begin{equation}
F_j(t,x)=c_jt+\delta_js_j(x)+d_j, \label{F_j}
\end{equation}
where $s_j(x)$ are Rakotch contractions with respect to the same function $\psi$ for all $j\in \mathbb N_N$. The coefficients $c_j, d_j$ and $\delta_j$ are constants where $\delta_j$ are chosen as follows.\\If $\sup\limits_{t>0}\frac{\psi(t)}{t}<1$, then there exists a $\beta$ such that $\sup\limits_{t>0}\frac{\psi(t)}{t}<\beta<1$. In this case choose $\delta_j$ such that $\delta=\max\limits_{j\in \mathbb{N}_N}|\delta_j|<\frac{1}{\beta}$. \\If $\sup\limits_{t>0}\frac{\psi(t)}{t}=1$, then choose $\delta_j$ such that $\delta=\max\limits_{j\in \mathbb{N}_N}|\delta_j|<1$.\\The functions $F_j$ are bounded with respect to $t$. 
\begin{remark}
    Unlike the construction of interpolation functions, the join-up conditions are not required. The generality of this construction is ensured by the fact that the vertical scaling factors $\delta_j$ may be any real number satisfying the given constraints. This flexibility distinguishes the approach from earlier work.
\end{remark}
\begin{remark}
The vertical scaling factors can be chosen as variable functions. In such a case $\delta=\max\limits_{j\in\mathbb{N}_N}\left\{\sup\limits_{t\in I}|\delta_j(t)|\right\}$ and $\delta$ must satisfy the same conditions as those required in the case of constant vertical scaling factors.
\end{remark}

Consider the set of real-valued bounded functions $\mathcal{B}(I)$ on $I$ equipped with the norm $||h||_{\infty}=\sup\left\{|h(t)|:t\in I\right\}.$
For $h \in\mathcal{B}(I)$ and $t \in I_j$, the Read-Bajraktarević operator $T$ on $\mathcal{B}(I)$ is defined as 
\begin{eqnarray}
Th(t)&=F_j(l_j^{-1}(t),h(l_j^{-1}(t))) = c_jl_j^{-1}(t)+\delta_js_j(h(l_j^{-1}(t)))+d_j\label{RB}.
\end{eqnarray}
\begin{theorem}\label{Rbfpthm}
The operator $T$ defined on $\mathcal{B}(I)$ by Eq.~\eqref{RB} is a Rakotch contraction and T has a unique fixed point $f\in\mathcal{B}(I)$ satisfying
\begin{equation}\label{fp}
f(t)=c_jl_j^{-1}(t)+\delta_js_j(f(l_j^{-1}(t)))+d_j, \forall \ t\in I_j,~ j\in \mathbb{N}_N.
\end{equation}
\end{theorem}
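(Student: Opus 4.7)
The plan is to show that $T$ is a Rakotch contraction on the Banach space $(\mathcal{B}(I),\|\cdot\|_\infty)$ and then invoke Proposition~\ref{rfp} to obtain the unique fixed point. A preliminary check is that $T$ maps $\mathcal{B}(I)$ into itself: for bounded $h$ and each $j$, the composition $h\circ l_j^{-1}$ is bounded on $I_j$, the Rakotch contraction $s_j$ carries bounded sets to bounded sets (at its fixed point $x_j^*$ one has $|s_j(x)|\le|x_j^*|+\psi(|x-x_j^*|)<|x_j^*|+|x-x_j^*|$), and the affine part $c_jl_j^{-1}(t)+d_j$ is bounded on $I_j$; these bounds on finitely many pieces glue together to give $Th\in\mathcal{B}(I)$.

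For the contraction estimate, fix $h,g\in\mathcal{B}(I)$ and $t\in I_j$. The affine terms cancel, and the $\psi$-contractivity of $s_j$ gives
\begin{equation*}
|Th(t)-Tg(t)|=|\delta_j|\,|s_j(h(l_j^{-1}(t)))-s_j(g(l_j^{-1}(t)))|\le\delta\,\psi\bigl(|h(l_j^{-1}(t))-g(l_j^{-1}(t))|\bigr).
\end{equation*}
Using the monotonicity of $\psi$ (a standard accompanying hypothesis in the Rakotch setting) together with $|h(l_j^{-1}(t))-g(l_j^{-1}(t))|\le\|h-g\|_\infty$, taking the supremum over $t\in I$ yields $\|Th-Tg\|_\infty\le\delta\,\psi(\|h-g\|_\infty)$. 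Writing $\tilde\psi(u):=\delta\,\psi(u)$, this says $T$ is a $\tilde\psi$-contraction on $\mathcal{B}(I)$.

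It remains to verify that $\tilde\psi$ satisfies the two Rakotch conditions. The monotonicity of ratios $\tilde\psi(t)/t\le\tilde\psi(s)/s$ for $t>s$ is inherited directly from $\psi$, since the factor $\delta$ is constant. For the strict bound $\tilde\psi(u)/u<1$ I follow the paper's bifurcated choice of $\delta$. If $\sup_{u>0}\psi(u)/u<1$, the choice of $\beta$ with $\sup_{u>0}\psi(u)/u<\beta<1$ and $\delta<1/\beta$ gives $\tilde\psi(u)/u\le\delta\beta<1$. If instead $\sup_{u>0}\psi(u)/u=1$, the pointwise strict inequality $\psi(u)/u<1$ combined with $\delta<1$ yields $\tilde\psi(u)/u<\delta<1$. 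Hence $T$ is a Rakotch contraction, Proposition~\ref{rfp} supplies the unique fixed point $f\in\mathcal{B}(I)$, and reading $Tf=f$ on each $I_j$ from \eqref{RB} produces the fixed-point identity \eqref{fp}.

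The main obstacle is the second case above: when $\sup_{u>0}\psi(u)/u=1$ there is no uniform strict upper bound on $\psi(u)/u$ that can be factored out, and the contraction argument survives only because the pointwise strict inequality $\psi(u)/u<1$ and the uniform bound $\delta<1$ conspire to give $\tilde\psi(u)/u<\delta<1$. This case-dependent prescription of $\delta$ in terms of $\sup\psi(u)/u$ is exactly what allows the construction to accommodate vertical scaling factors with $|\delta_j|>1$, a feature flagged as novel in the introductory remark.
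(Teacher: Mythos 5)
Your proposal is correct and follows essentially the same route as the paper: establish $\|Th-Tg\|_\infty\le\delta\,\psi(\|h-g\|_\infty)$, check that $u\mapsto\delta\psi(u)$ satisfies the two Rakotch ratio conditions, and invoke Proposition~\ref{rfp}. You are in fact slightly more careful than the paper in two places -- the boundedness of $Th$ via the fixed point of $s_j$, and the explicit two-case verification that $\delta\psi(u)/u<1$, which the paper compresses into the phrase ``with appropriate choices of $\delta_j$'s'' -- but these are elaborations of the same argument rather than a different one.
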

\begin{proof}
$I$ is a disjoint union of $I_j$'s for $j\in\mathbb N_N$.
Clearly, $Tg\in\mathcal{B}(I)$ since each of the $F_j$'s are bounded with respect to $t$ and $g\in\mathcal{B}(I)$. Hence, the operator $T$ is well-defined. Let $g,g'\in\mathcal{B}(I)$. Consider
\begin{eqnarray*}
d_{\mathcal{B}(I)}(Tg,Tg')&=&\sup\limits_{t\in I}|Tg(t)-Tg'(t)|=\max\limits_{j\in \mathbb{N}_N}\sup\limits_{t\in I_j}|Tg(t)-Tg'(t)|.
\end{eqnarray*}
Applying Eq.~\eqref{RB},
\begin{eqnarray*}
d_{\mathcal{B}(I)}(Tg,Tg')&=&\max\limits_{j\in \mathbb{N}_N}\sup\limits_{t\in I_j}|\delta_j(s_j(g(l_j^{-1}(t)))-s_j(g'(l_j^{-1}(t))))|\\
&\leq&\max\limits_{j\in \mathbb{N}_N}|\delta_j|\max\limits_{j\in \mathbb{N}_N}  \sup_{t\in I_j}\psi(|g(l_j^{-1}(t))-g'(l_j^{-1}(t))|),
\end{eqnarray*}
where $\psi(t):\mathbb{R}_{+}\rightarrow\mathbb{R}_{+}$ is a non decreasing function with $\psi(t) < t\ \forall \  t>0$. As $\psi$ is a non decreasing function, for $j\ \in \mathbb N_N$ and $t\ \in I_{j}$,
\begin{eqnarray*}
\psi(|g(l_{j}^{-1}(t))-g'(l_{j}^{-1}(t))|)&\leq&\psi(\sup_{t\in I_{j}}|g(l_{j}^{-1}(t))-g'(l_{j}^{-1}(t))|)\leq\psi(\sup_{t\in I}|g(t)-g'(t)|)\\
&=&\psi(d_{\mathcal{B}(I)}(g,g')).
\end{eqnarray*}
As $t\in I_{j}$ and $j\in\mathbb N_N$ are arbitrary,
\begin{eqnarray*}
\max\limits_{j\in \mathbb N_N}\sup_{t\in I_j}\psi(|g(l_j^{-1}(t))-g'(l_j^{-1}(t))|)&\leq&\psi(d_{\mathcal{B}(I)}(g,g')).
\end{eqnarray*}
This gives
\begin{eqnarray}
d_{\mathcal{B}(I)}(Tg,Tg')&\leq&\delta~\psi(d_{\mathcal{B}(I)}(g,g')),\label{TRakotch}
\end{eqnarray}
where $\delta=\max\limits_{j\in \mathbb{N}_N}|\delta_j|$. Define a function $\zeta:[0,\infty)\rightarrow[0,\infty)$ as $\zeta(t)=\delta~\psi(t)$.

\noindent From Eq.\eqref{TRakotch}
$$d_{\mathcal{B}(I)}(Tg,Tg')\leq \zeta(d_{\mathcal{B}(I)}(g,g')).$$ With appropriate choices of $\delta_j$'s, $\frac{\zeta(t)}{t}<1, ~\forall t>0$. From the Rakotch condition for $\psi$ it follows that $\frac{\zeta(t)}{t}\leq\frac{\zeta(s)}{s} ~\forall~t>s$. 
Hence, the R.B. operator $T$ is a Rakotch contraction on $\mathcal{B}(I)$. Then, by Proposition~\ref{rfp}, $T$ has a unique fixed point $f$, which satisfies
\begin{equation*}
f(t)=c_jl_j^{-1}(t)+\delta_js_j(f(l_j^{-1}(t)))+d_j.
\end{equation*}
\end{proof}
\begin{theorem}\label{graphoffp}
Let $F_j$ and $l_j$ be as defined in Eq.~\eqref{F_j} and Eq.~\eqref{l_j}. Consider~
\\$\mathcal{I}=\left\{ A; w_j, j\in\mathbb N_N\right\}$ where  $w_j: A\rightarrow A $ are defined by 
\begin{equation}
w_j(t,x)=(l_j(t),F_j(t,x)).             \label{wi}
\end{equation}
Let $C=\max\limits_{j\in\mathbb N_N}|c_j|$, where $c_j$ are constants given in Eq.~\eqref{F_j}. Then, each of the maps $w_j$ is a Rakotch contraction with respect to the metric $d_{\eta}$ given as
\begin{equation*}
d_{\eta}((t,x),(t',x')):=|t-t'|+\eta|x-x'|,
\end{equation*}
where $\eta=\frac{1-\max\limits_{j\in \mathbb N_N}|a_j|}{2(C+1)}$.
Further, a unique attractor exists for $\mathcal{I}$. The attractor is the closure of the graph of the fixed point of the R.B. operator given in Eq.~\eqref{RB}.
\end{theorem}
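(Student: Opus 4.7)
The plan has two parts: first, establish that each $w_j$ is a Rakotch contraction on $(A,d_\eta)$ so that the preceding proposition furnishes a unique compact attractor $A^{*}$; second, identify $A^{*}$ with $\overline{G}$, where $G=\{(t,f(t)):t\in I\}$ is the graph of the R.B.\ fixed point $f$ from Theorem~\ref{Rbfpthm}.

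For the contraction estimate, I would compute directly, for $(t,x),(t',x')\in A$,
\begin{equation*}
d_\eta(w_j(t,x),w_j(t',x'))\leq (|a_j|+\eta|c_j|)\,|t-t'|+\eta|\delta_j|\,\psi(|x-x'|)\leq (a+\eta C)\,|t-t'|+\eta\delta\,\psi(|x-x'|),
\end{equation*}
where $a=\max_j|a_j|$. The prescribed $\eta=(1-a)/(2(C+1))$ forces $\kappa:=a+\eta C\leq(1+a)/2<1$. In both cases of the hypothesis on $\delta$, the Rakotch properties of $\psi$ yield a constant $\rho<1$ with $\delta\psi(v)\leq\rho v$ for all $v\geq 0$ (take $\rho=\delta\beta$ in the first case where $\sup\psi(t)/t<\beta<1$, and $\rho=\delta$ in the second case where $\delta<1$ and $\psi(v)\leq v$). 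Hence
\begin{equation*}
d_\eta(w_j(t,x),w_j(t',x'))\leq\max(\kappa,\rho)\,d_\eta((t,x),(t',x')),
\end{equation*}
so $w_j$ is a Banach contraction and, \emph{a fortiori}, a Rakotch contraction. The IFS proposition then yields a unique compact $A^{*}=\bigcup_j w_j(A^{*})$.

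For the identification $A^{*}=\overline{G}$, I would verify that $\overline{G}$ is a nonempty compact set satisfying $\overline{G}=\bigcup_j w_j(\overline{G})$, and invoke uniqueness. Compactness follows from $G\subset I\times[-\|f\|_\infty,\|f\|_\infty]$. For the self-similarity, Eq.~\eqref{fp} rewrites as $(t,f(t))=w_j(l_j^{-1}(t),f(l_j^{-1}(t)))$ for each $t\in I_j$, giving $G\subset\bigcup_j w_j(G)$ and hence $\overline{G}\subset\bigcup_j w_j(\overline{G})$ by continuity of each $w_j$. Conversely, for $(s,f(s))\in G$, if $l_j(s)\in I_j$ then the fixed-point equation gives $w_j(s,f(s))=(l_j(s),f(l_j(s)))\in G$; the only exceptions are the boundary values $s=t_N$ with $j<N$, producing the points $p_j:=(t_j,F_j(t_N,f(t_N)))$.

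The main obstacle is confirming that each $p_j$ lies in $\overline{G}$, since $f$ is allowed to be discontinuous at the nodes. I plan to establish this via left-continuity of $f$ at $t_N$: the operator $T$ preserves the class of bounded functions left-continuous at $t_N$, and uniform limits inherit left-continuity, so the fixed point $f=\lim_n T^n g$ (starting from, e.g., a constant function) is left-continuous at $t_N$. Then for any sequence $s_n\to t_N^{-}$, the graph points $(l_j(s_n),f(l_j(s_n)))\in G$ converge to $(t_j,F_j(t_N,f(t_N)))=p_j$, placing $p_j\in\overline{G}$. Combined with continuity of each $w_j$, this gives $\bigcup_j w_j(\overline{G})\subset\overline{G}$ and completes the identification $A^{*}=\overline{G}$.
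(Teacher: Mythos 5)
Your proposal is correct and follows the same skeleton as the paper (contraction estimate $\Rightarrow$ unique attractor; then show $\overline{G}=\bigcup_j w_j(\overline{G})$ and invoke uniqueness), but it differs in both halves in ways worth noting. For the contraction estimate, you observe that the standing constraints on $\delta$ (namely $\delta\,\sup_{t>0}\psi(t)/t<1$ in either case) force $\delta\psi(v)\leq\rho v$ with a fixed $\rho<1$, so each $w_j$ is in fact a Banach contraction; this is valid and shorter. The paper instead keeps the genuinely Rakotch form, bounding $\zeta(|x-x'|)$ by $\frac{\zeta(u)}{u}\,u$ with $u=|t-t'|+|x-x'|$ and producing the comparison function $\gamma(s)=\max\{\max_j|a_j|+\eta C+\eta,\ \zeta(s)/s\}$; that route is what one would need if the hypotheses on $\delta$ were relaxed so that no uniform linear bound on $\delta\psi$ existed, whereas your shortcut exploits the specific choices made in Section~3. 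For the identification of the attractor, your treatment is actually more careful than the paper's: the paper asserts that every point of $W(G)$ lies in $G$ via $F_{j}(\tilde t,f(\tilde t))=f(l_{j}(\tilde t))$, which silently uses $l_j(\tilde t)\in I_j$ and therefore fails for $\tilde t=t_N$ with $j<N$, exactly the boundary points $p_j=(t_j,F_j(t_N,f(t_N)))$ you isolate. Your repair --- showing $T$ preserves left-continuity at $t_N$ (since $l_N$ is increasing and $F_N$, $s_N$ are continuous), that this class is closed under uniform limits, hence $f$ is left-continuous at $t_N$, and then approaching $p_j$ by $(l_j(s_n),f(l_j(s_n)))$ with $s_n\to t_N^{-}$ --- is sound and closes a genuine gap in the published argument. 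Both halves of your proof go through.
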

\begin{proof}
Consider $(t,x),(t',x') \in I\times\mathbb{R}$. Then
\begin{eqnarray*}
    |F_j(t,x)-F_j(t',x')|&=&|c_jt+\delta_js_j(x)+d_j-(c_jt'+\delta_js_j(x')+d_j)|\\
    &\leq&C|t-t'|+\delta|s_j(x)-s_j(x')|\leq C|t-t'|+\zeta(|x-x'|),
\end{eqnarray*}
where $\zeta(t)=\delta~\psi(t)$.
\begin{eqnarray*}
d_\eta(w_j(t,x),w_j(t',x'))&=& d_\eta((l_j(t),F_j(t,x)),(l_j(t'),F_j(t',x')))\\
&=& |l_j(t)-l_j(t')|+\eta|F_j(t,x)-F_j(t',x')|\\
&\leq&(|a_j|+\eta C)|t-t'|+\eta\zeta(|x-x'|)\\
&\leq&\bigg(|a_j|+\eta C+\eta\frac{\zeta(|t-t'|+|x-x'|)}{|t-t'|+|x-x'|}\bigg)|t-t'|\\
& &\hspace{2cm}+\eta\frac{\zeta(|t-t'|+|x-x'|)}{|t-t'|+|x-x'|}(|x-x'|).\\
\end{eqnarray*}
As $\frac{\zeta(s)}{s}<1 $ for all $ 
s>0,$
\begin{eqnarray*}
d_\eta(w_j(t,x),w_j(t',x'))&\leq&(\max\limits_{j\in\mathbb N_N} |a_j|+\eta C+\eta)|t-t'| \\
& &\hspace{2cm}+\eta\frac{\zeta(|t-t'|+|x-x'|)}{|t-t'|+|x-x'|}(|x-x'|)\\
&\leq&\max\{\max\limits_{j\in\mathbb N_N
} |a_j|+\eta C+\eta,\frac{\zeta(|t-t'|+|x-x'|)}{|t-t'|+|x-x'|}\} \times \\
& &\hspace{5cm} d_\eta((t,x),(t',x')).
\end{eqnarray*}
 Since $\eta=\frac{1-\max\limits_{j\in\mathbb N_N}|a_j|}{2(C+1)}$, $\max\limits_{j\in\mathbb N_N}|a_j|+\eta C+\eta<1$. Note that $\frac{\zeta(s)}{s}<1$ for $s>0$. Thus,
\begin{equation*}
\max\{\max\limits_{j\in\mathbb N_N} |a_j|+\eta C+\eta,\frac{\zeta(|t-t'|+|x-x'|)}{|t-t'|+|x-x'|}\}<1.
\end{equation*}
For $s>0$, define
\begin{equation*}
\gamma(s):=\max\{\max\limits_{j\in\mathbb N_N} |a_j|+\eta C+\eta,\frac{\zeta(s)}{s}\}.
\end{equation*}
Since $\gamma(s)$ is a non-increasing function,
\begin{eqnarray*}
d_\eta(w_j(t,x),w_j(t',x'))&\leq& \gamma(d((t,x),(t',x')))d_\eta((t,x),(t',x'))\\
&\leq&\gamma(d_\eta((t,x),(t',x')))d_\eta((t,x),(t',x')).
\end{eqnarray*}
Thus, $w_j$ are Rakotch contractions, and $\mathcal{I}$ has a unique attractor, say $B$.

Let $\mathcal{H}(A)$ be the space comprising nonempty compact subsets of $A$ equipped with the Hausdorff metric.
Define $W:\mathcal{H}(A)\rightarrow\mathcal{H}(A)$ as
\begin{equation*}
W(K)=\bigcup\limits_{j=1}^N w_j(K).
\end{equation*}
Then, $B=W(B)$. 

Let $f$ be the fixed point of the R.B. operator given by Eq.~\eqref{RB} and $G$ be its graph, i.e.,
\begin{equation*}
G=\left\{(t,f(t)):t\in I\right\}.
\end{equation*}
Let $(t,x)\in \bar{G}$. So, there exists a sequence $t_m\in I$ such that $(t_m,f(t_m))\in G$ converges to $(t,x)$. Since $I$ is a disjoint union of $I_j$, there exists a unique $j_m\in\mathbb N_N$ such that $t_m\in I_{j_m}$. So $t_m=l_{j_m}(\tilde{t_m})$ for some $\tilde{t_m}\in I$. Further,
\begin{eqnarray*}
(t_m,f(t_m))=(l_{j_m}(\tilde{t_m}),f(l_{j_m}(\tilde{t_m})))
=(l_{j_m}(\tilde{t_m}),F_{j_m}(\tilde{t_m},f(\tilde{t_m})))=w_{j_m}(\tilde{t_m},f(\tilde{t_m})),
\end{eqnarray*}
 which implies $(t_m,f(t_m))\in W(G)\subset W(\bar{G})$. Thus, $(t,x)\in W(\bar{G})=\overline{W(
G )}$. So, 
\begin{equation}
\bar{G}\subseteq W(\bar{G}).   \label{eq:G1}   
\end{equation}
Conversely, let $(t,x) \in W(\bar{G})=\overline{W(G)}$. Now, there exists a sequence $\{(t_m,x_m)\} \in W(G)$ such that $(t_m,x_m) \rightarrow (t,x)$ as $m \rightarrow \infty$. Since $(t_m,x_m) \in W(G)$, there exists a unique $j_m\in\mathbb N_N$ such that 
\begin{eqnarray*}
(t_m, x_m) = w_{j_m}(\tilde{t_m}, f(\tilde{t_m})) = (l_{j_m}(\tilde{t_m}),F_{j_m}(\tilde{t_m},f(\tilde{t_m}))) = (l_{j_m}(\tilde{t_m}),f(l_{j_m}(\tilde{t_m}))).  
\end{eqnarray*} 
This implies that $(t_m,x_m) = (l_{j_m}(\tilde{t_m}),f(l_{j_m}(\tilde{t_m}))) \in G$ and hence $(t,x) \in \bar{G}$. This gives
\begin{equation}
W(\bar{G})\subseteq \bar{G}. \label{eq:G2}   
\end{equation}
Eq.~\eqref{eq:G1} and Eq.~\eqref{eq:G2} together implies that $W(\bar{G})=\bar{G}$. Since the attractor $B$ is unique which satisfy the equation  $B=W(B)$, $B=\bar{G}$. 
\end{proof}	
\newpage
\begin{example}\label{nlfeg}
    Consider the interval $I=[0,1]$ and the set of points $\left\{0,\frac{1}{2},1\right\}\subset I$. The functions $l_j:I\rightarrow I$ in Eq.~\eqref{l_j} are given by,
    $$l_1(t)=\frac{1}{2}t ~\text{ and }~ l_2(t)=\frac{1}{2}t+\frac{1}{2}.$$ Let $s_1(x)=s_2(x)=\frac{1}{2}sin x$. $s_j(x)$ for $j=1, 2$ are Rakotch contractions with respect to the function $\psi(x)=\frac{1}{2}x$. Let $\delta_1(t)=\frac{3}{2}t$ and $\delta_2(t)=\frac{7}{4}t$.
    Now consider the functions
    \begin{equation*}
        F_1(t,x)=\frac{1}{2}t+\frac{3t}{2}\frac{1}{2}sinx+\frac{1}{3} ~\text{ and }~
        F_2(t,x)=\frac{1}{4}t+\frac{7t}{4}\frac{1}{2}sinx+\frac{1}{6}.
    \end{equation*}
    The fixed point of the R.B. operator is given as
    \begin{equation*}
        f(t)=
        \begin{cases}
            t+\frac{3t}{2}sin(f(2t))+\frac{1}{3} & \text{ if } t\in[0,\frac{1}{2})\\
            \frac{1}{2}t+\frac{7t}{4}sin(f(2t-1))-\frac{23}{24}& \text{ if } t\in[\frac{1}{2},1].
        \end{cases}
    \end{equation*}
    Fig.~\ref{vareg} represents the graph of f:
    \begin{figure}[H]
  \centering
\includegraphics[width=0.9\linewidth]{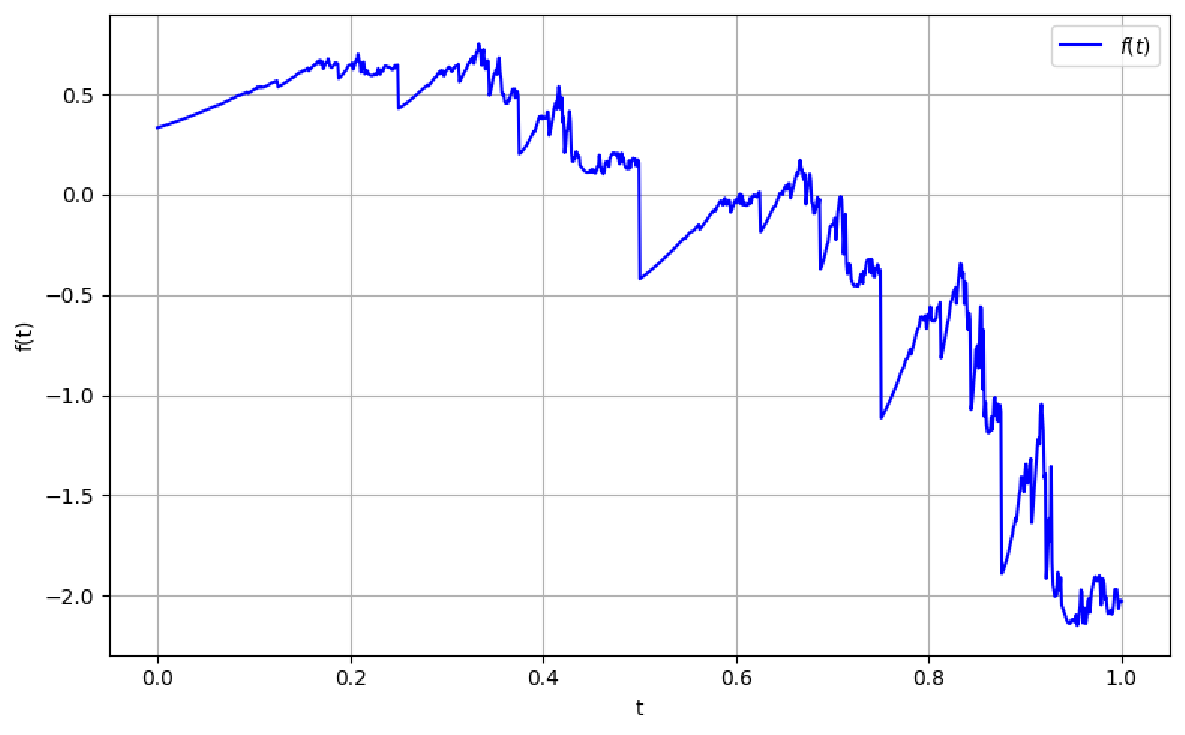}
  \caption{\centering{Graph of $f$} }
  \label{vareg}
\end{figure}%
\end{example}
\noindent Example~\ref{nlfeg} illustrated the generality of the construction. The vertical scaling factors are chosen as variable functions and $\delta>1$.

\begin{definition}\cite{barnsley2023histopolating}
For a real-valued function $f$ from the metric space $(X,\rho)$, the oscillation of the function $f$ in $A\subset X$ is defined as
\begin{equation*}
w_f(A)=\sup_{a,b\in A}|f(a)-f(b)|
\end{equation*}
and oscillation of $f$ at a point $x \in X$ is defined as
\begin{equation*}
\lim_{r\rightarrow0}w_f(B(x,r))
\end{equation*}
where, $B(x,r)$ denotes the open ball with center $x$ and radius $r$.
\end{definition}

\noindent The following theorem establishes the Riemann integrability of the fixed point $f$.

\begin{theorem} \label{riemann}
Let $l_j$ and $F_j$ be as defined in Eq.~\eqref{l_j} and Eq.~\eqref{F_j} respectively. Let $\alpha=\max\limits_{j\in \mathbb N_N}|a_j| < 1$ and $\delta=\max\limits_{j\in \mathbb{N}_N}|\delta_j|<1$ where $a_j$ and $\delta_j$ are the constants as defined in Eq.~\eqref{l_j} and Eq.~\eqref{F_j}.
Then, the fixed point of the R.B. operator, f, is Riemann integrable.
\end{theorem}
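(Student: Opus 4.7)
The plan is to invoke Lebesgue's criterion: a bounded function on a closed and bounded interval is Riemann integrable if and only if its set of discontinuities has Lebesgue measure zero. Theorem~\ref{Rbfpthm} already places $f$ in $\mathcal{B}(I)$, so $f$ is bounded, and the task reduces to exhibiting a countable (hence Lebesgue-null) set that contains every discontinuity of $f$.

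To produce such a set, define inductively the ``level-$n$ address sets'' $S_0 := \{t_1, \dots, t_{N-1}\}$ (the interior seams of the partition) and $S_n := \bigcup_{j \in \mathbb{N}_N} l_j(S_{n-1})$ for $n \geq 1$. Each $S_n$ is finite, so $S := \bigcup_{n \geq 0} S_n$ is countable. The core step is to show that $f$ is continuous at every $t \in I \setminus S$. Starting from a continuous $g \in \mathcal{B}(I)$, say $g \equiv 0$, Theorem~\ref{Rbfpthm} together with Proposition~\ref{rfp} gives that the iterates $T^n g$ converge to $f$ uniformly on $I$. By induction on $n$ I then verify that $T^n g$ is continuous on $I \setminus (S_0 \cup \cdots \cup S_{n-1})$. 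For $n=1$, the piecewise formula $Tg(t) = c_j l_j^{-1}(t) + \delta_j s_j(g(l_j^{-1}(t))) + d_j$ is continuous on each half-open $I_j$, so $Tg$ can fail continuity only at the seams in $S_0$. For the induction step, the representation $T^n g(t) = F_j(l_j^{-1}(t), T^{n-1} g(l_j^{-1}(t)))$ on $I_j$ is a composition of continuous maps ($F_j$, $l_j^{-1}$) with $T^{n-1}g$, so the discontinuities of $T^n g$ in the interior of $I_j$ can only arise as $l_j$-images of discontinuities of $T^{n-1}g$, giving a subset of $l_j(S_0 \cup \cdots \cup S_{n-2})$; taking the union over $j$ and appending the seams $S_0$ yields the claim.

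The conclusion then follows from the standard $\varepsilon/3$ argument for uniform limits of functions continuous at a point: at any $t \in I \setminus S$, every $T^n g$ is continuous at $t$, and the uniform convergence $T^n g \to f$ transfers continuity to $f$ at $t$. Hence the discontinuity set of $f$ is contained in the countable set $S$, and Lebesgue's criterion delivers Riemann integrability of the bounded function $f$. The main obstacle I anticipate is the inductive discontinuity-tracking in the previous paragraph: one must check carefully that the operator $T$ introduces no new discontinuities in the interior of each $I_j$ beyond the $l_j$-pullbacks of the known discontinuities of $T^{n-1}g$, and that the only additional discontinuities created come from the original partition seams $S_0$. The hypotheses $\alpha < 1$ and $\delta < 1$ enter implicitly through Theorem~\ref{Rbfpthm}, ensuring that $T$ is a Rakotch contraction on $\mathcal{B}(I)$ whose iterates converge uniformly to the fixed point $f$.
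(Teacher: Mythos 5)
Your proof is correct, but it takes a genuinely different route from the paper's. You track the discontinuities of the Picard iterates $T^n g$ through the inductive seam sets $S_n=\bigcup_j l_j(S_{n-1})$ and then transfer pointwise continuity to $f$ via the uniform convergence $T^n g\to f$ (convergence in $d_{\mathcal{B}(I)}$ is exactly sup-norm convergence, and $s_j$ is $1$-Lipschitz since $|s_j(x)-s_j(y)|\leq\psi(|x-y|)<|x-y|$, so each $F_j$ is jointly continuous and the induction goes through). The paper instead works directly with the fixed-point equation: it derives the recursive oscillation bound $w_f(l_j(I'))\leq C|I'|+\delta\, w_f(I')$, iterates it along finite codes $\sigma|_p$ to get $w_f(l_{\sigma|_p}(I'))\lesssim \delta^p w_f(I)+C|I'|\cdot(\text{terms in }\alpha^p,\delta^p)$, and uses the address map $\beta:\Omega\to I$ from the code space to produce shrinking neighbourhoods $V_p$ of any $t\notin D$ with $w_f(V_p)\to 0$. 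Both arguments funnel into Lebesgue's criterion with the same countable exceptional set. What each buys: the paper's oscillation recursion is quantitative (it gives an explicit decay rate for the local oscillation of $f$) and makes visible where $\alpha<1$ and $\delta<1$ enter; your argument is more elementary (no code space, no oscillation recursion) and, notably, nowhere uses $\delta<1$ beyond what is needed for $T$ to be a Rakotch contraction in Theorem~\ref{Rbfpthm}. So your proof actually establishes Riemann integrability under the weaker scaling hypotheses of that theorem (e.g.\ it covers Example~\ref{nlfeg} with $\delta>1$), which shows that the Remark following Theorem~\ref{riemann} concerns the paper's particular proof technique rather than the integrability conclusion itself. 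It would be worth stating explicitly the standard lemma you rely on --- a uniform limit of functions each continuous at a fixed point $t_0$ is continuous at $t_0$ --- since the paper's own phrase ``$D$ consists of points at which $f$ is discontinuous'' is precisely the step that needs this justification.
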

\begin{proof}
Theorem~\ref{Rbfpthm} established that the R.B. operator $T$ is a Rakotch contraction and it has a fixed point denoted as 
$f$. Hence $f=\lim\limits_{n\rightarrow\infty}g_n,$ where $g_n=Tg_{n-1}$ for any $g_0\in \mathcal{B}(I)$. Let $g_0$ be a constant function on $I$. Then,
\begin{eqnarray*}
g_1(l_j(t))=Tg_0(t) = c_jt+\delta_js_j(g_0(t))+d_j.
\end{eqnarray*}
The function $g_1$ may have discontinuities at the points of partition $t_1, t_2,\ldots, t_{N-1}$ and consequently $g_n$ have discontinuities at the points $l_{w}(t_k)$ where $w$ is a word of length $n$ and $k\in\left\{1,\ldots, N-1\right\}$. Let $D_n$ be the set of points at which $g_n$ is discontinuous. Then $D_n$ is a finite set. Let $D=\bigcup\limits_{n=1}^{\infty}D_n$. It is a countable set as it is a countable union of finite sets. So $D$ has Lebesgue measure zero.  $D$ consists of points at which $f$ is discontinuous.

 From the construction of the IFS, the maps $F_j$ are Lipschitz in $t$ with Lipschitz constant $c_j$ and Rakotch with respect to $x$. Let $C=\max\limits_{j\in\mathbb N_N}|c_j|$.

\noindent For $I'\subset I$,
\begin{eqnarray} \nonumber
w_f(l_j(I'))&=&\sup_{a,b\in I'}|f(l_j(a))-f(l_j(b))|\\ \nonumber
&=&\sup_{a,b\in I'}|c_j(a-b)+\delta_j(s_j(f(a))-s_j(f(b)))|\\ \nonumber
&\leq&C\sup_{a,b\in I'}|a-b|+\sup\limits_{j\in\mathbb{N}_N}|\delta_j|\sup_{a,b\in I'}|s_j(f(a))-s_j(f(b))|\\ \nonumber
&\leq&C|I'|+\delta\sup_{a,b\in I'}\psi(|f(a)-f(b)|)\\ 
&\leq&C|I'|+\delta\sup_{a,b\in I'}|f(a)-f(b)| \leq C|I'|+\delta w_f(I'),\label{5}
\end{eqnarray}
where $\delta=\max\{|\delta_j|:j\in\mathbb N_N\}<1$. 

Consider the code space $\Omega$ corresponding to $j\in\mathbb N_N$ equipped with the metric $d(\sigma,\sigma')=2^{-p}$ where $p$ is the smallest index for which $\sigma_p\neq\sigma'_p$. This is a compact metric space.  For any $\sigma\in \Omega$, $\sigma|_p=\sigma_1\sigma_2\ldots\sigma_p$ is the finite code of length $p$.\\ From Eq.~\eqref{5},
\begin{eqnarray}\nonumber
w_f(l_{\sigma|_p}(I'))&=&w_f(l_{\sigma_1}\circ l_{\sigma_2}\circ\ldots \circ l_{\sigma_p}(I'))\leq C\alpha^{p-1}|I'|+\delta w_f(l_{\sigma_2}\circ\ldots \circ l_{\sigma_{p-1}}\circ l_{\sigma_p}(I')).\label{osci}
\end{eqnarray}
Repeating the above process recursively,
\begin{equation*}
w_f(l_{\sigma|_p}(I')) \leq 
\begin{cases}
\delta^pw_f(I')+C|I'|\frac{\alpha^p-\delta^p}{\alpha-\delta} & \text{if } \delta<\alpha\\
\delta^pw_f(I')+C|I'|\frac{\delta^p-\alpha^p}{\delta-\alpha} & \text{if } \alpha<\delta\\
\alpha^pw_f(I')+C|I'|p\alpha^{p-1} & \text{if } \alpha=\delta.
\end{cases}
\end{equation*}
Now, $I$ is the attractor of the IFS $\left\{I;l_j,j\in\mathbb N_N\right\}$. For any $t\in I$, $\lim\limits_{p\rightarrow\infty}l_{\sigma_1}\circ l_{\sigma_2}\circ \ldots \circ l_{\sigma_p}(t')=t$ exists and is independent of $t$. The function $\beta:\Omega\rightarrow I$ given by
\begin{equation*}
\beta(\sigma):=\lim_{p\rightarrow\infty}l_{\sigma_1}\circ l_{\sigma_2}\circ \ldots \circ l_{\sigma_p}(t)
\end{equation*}
is continuous and onto. Therefore, for every $t\in I$, $\ \exists\ \sigma\in \Omega$ such that
\begin{equation*}
t=\beta(\sigma)=\bigcap\limits_{p=1}^\infty l_{\sigma|_p}(I).
\end{equation*} 
If $t\notin D$, there is an open interval $V_p$ such that 
\begin{equation*}
t\in V_p \subset l_{\sigma|_p}(I).
\end{equation*}
Let $U_p=l_{\sigma|_p}^{-1}(V_p)$. Since $U_p\subset I$,
\begin{equation}
w_f(V_p)=w_f(l_{\sigma|_p}(U_p))\leq
\begin{cases}\label{oscicases}
\delta^pw_f(I)+C|I|\frac{\alpha^p-\delta^p}{\alpha-\delta} & \text{if } \delta<\alpha\\
\delta^pw_f(I)+C|I|\frac{\delta^p-\alpha^p}{\delta-\alpha} & \text{if } \alpha<\delta\\
\alpha^pw_f(I)+C|I|p\alpha^{p-1} & \text{if } \alpha=\delta.		\end{cases}
\end{equation}
Finally, $ \delta^p\rightarrow 0,\ \alpha^p\rightarrow 0\text{ and } p\alpha^{p-1}\rightarrow 0\text{ as } p\rightarrow\infty $. Choose $p$ large enough such that $w_f(V_p)<\epsilon, \ \forall\ \epsilon>0.$ Thus, $w_f(t)=0$ if $t\notin D$. As $D$ has measure zero and $f$ is continuous on $I\setminus D$, $f$ satisfies Lebesgue's criterion. Thus, $f$ is
Riemann integrable.
\end{proof}	
\begin{remark}The condition $\delta < 1$ is crucial for the validity of Theorem~\ref{riemann}.\\ If $\delta \geq 1$, then the sequence $\delta^p$ does not converge to zero as $p \to \infty$ and the quantity $w_f(V_p)$ cannot be made arbitrarily small. 
\end{remark}
\section{Nonlinear Fractal Histopolation functions}\label{NFHfunctions}
Let $I=[t_0,t_N]\subset\mathbb{R}$ and $\left\{t_0,t_1,\dots, t_N\right\}\subset I$ such that $t_j<t_{j+1}$. Let $F=\left\{y_1,y_2,\dots y_N\right\}$ be the associated histogram, i.e., the frequency of the interval $I_j=[t_{j-1},t_j)$ is $y_j$ for $j=1,2\ldots, N-1$ and $y_N$ is the frequency of $[t_{N-1},t_N]$. The length of the interval $I_j$ is $a_j(t_N-t_0)$. The solution to the problem of histopolation is a function $f$ which satisfies
\begin{equation}
\int\limits_{t_{j-1}}^{t_j} f(t)dt=y_ja_j(t_N-t_0).\label{hdef}
\end{equation}

\noindent For $j\in\mathbb N_N$, let $l_j$ on $I$ be defined as in Eq.~\eqref{l_j} and $F_j: A\rightarrow\mathbb{R}$ be functions which are defined as in Eq.~\eqref{F_j}. Then, by Theorem~\ref{graphoffp} and Theorem~\ref{riemann}, a Riemann integrable function $f\in\mathcal{B}(I)$ exists and it satisfies Eq.~\eqref{fp}.
\begin{theorem}
Let $\left\{t_0,t_1,\ldots, t_N\right\}\subset I=[t_0,t_N]$ with $t_{j-1} < t_j$ for $j\in\mathbb N_N$ and $F=\left\{y_1,y_2,\ldots, y_N\right\}$ be the corresponding histogram. Let $\mathcal{I}=\{A;w_j,j\in\mathbb N_N\}$, where $w_j$ are defined as in Eq.~\eqref{wi} be the IFS. Assume that $c_j$ and $\delta_j$ are fixed according to the assumptions in Theorem~\ref{riemann}. Then, $f$, the fixed point of the R.B. operator in Eq.\eqref{RB}, solves the histopolation problem if and only if $d_j$ satisfies
\begin{equation}
d_j=\frac{2y_j(t_N-t_0)-c_j(t_N^2-t_0^2)-2\delta_j\int\limits_Is_j(f(t))dt}{2(t_N-t_0)}.\label{hcondn}
\end{equation}
\end{theorem}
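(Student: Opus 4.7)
The plan is to directly compute $\int_{t_{j-1}}^{t_j} f(t)\,dt$ using the fixed point equation \eqref{fp} and to see that the histopolation condition \eqref{hdef} is equivalent to a single linear equation in $d_j$. Since the hypotheses of Theorem~\ref{riemann} are assumed, the fixed point $f$ is Riemann integrable, so the integrals appearing in \eqref{hcondn} are well defined; because each $s_j$ is continuous (as a Rakotch contraction) and $f$ is bounded with set of discontinuities of Lebesgue measure zero, $s_j\circ f$ is likewise Riemann integrable, so no integrability issue will obstruct the argument.

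First I would fix $j\in\mathbb{N}_N$ and, using the fact that $l_j:I\to [t_{j-1},t_j]$ is the affine bijection $l_j(u)=a_j u+b_j$ with $l_j(t_0)=t_{j-1}$ and $l_j(t_N)=t_j$, perform the change of variable $t=l_j(u)$, so that $dt=a_j\,du$ and $l_j^{-1}(t)=u$ runs over $I=[t_0,t_N]$. Applying \eqref{fp} on $I_j$ (noting that the endpoint $t_j$ contributes nothing to the integral) gives
\begin{equation*}
\int_{t_{j-1}}^{t_j} f(t)\,dt
= a_j\int_{t_0}^{t_N}\bigl(c_j u+\delta_j s_j(f(u))+d_j\bigr)\,du.
\end{equation*}
Evaluating the elementary pieces yields
\begin{equation*}
\int_{t_{j-1}}^{t_j} f(t)\,dt
= a_j\left[\frac{c_j(t_N^2-t_0^2)}{2}+\delta_j\int_I s_j(f(t))\,dt+d_j(t_N-t_0)\right].
\end{equation*}

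Next I would impose the histopolation identity \eqref{hdef}, namely $\int_{t_{j-1}}^{t_j} f(t)\,dt = y_j a_j(t_N-t_0)$. Dividing by $a_j>0$ and rearranging isolates $d_j$ as
\begin{equation*}
d_j(t_N-t_0) = y_j(t_N-t_0)-\frac{c_j(t_N^2-t_0^2)}{2}-\delta_j\int_I s_j(f(t))\,dt,
\end{equation*}
which, after multiplication by $2$ and division by $2(t_N-t_0)$, is precisely \eqref{hcondn}. Because every manipulation above is reversible, the derivation yields both directions of the biconditional at once: if $d_j$ is given by \eqref{hcondn}, then $\int_{t_{j-1}}^{t_j} f(t)\,dt=y_j a_j(t_N-t_0)$, and conversely the histopolation condition forces $d_j$ to take that value.

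The argument is essentially a one-line integration once the setup is in place, so there is no serious obstacle; the only subtle point to verify is that the change of variables in the Riemann integral is legitimate and that $s_j\circ f$ is Riemann integrable, both of which follow from Theorem~\ref{riemann} together with the continuity of $s_j$. The formula \eqref{hcondn} should be understood as an implicit relation, since the right-hand side itself depends on $f$ (hence on all the $d_i$'s through the fixed point equation); however, the existence of $f$ is guaranteed by Theorem~\ref{Rbfpthm} prior to imposing the histopolation condition, so \eqref{hcondn} is a consistency requirement on the coefficients $d_j$ rather than a definition used to construct $f$.
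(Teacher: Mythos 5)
Your proposal is correct and follows essentially the same route as the paper: substitute the fixed-point equation \eqref{fp} into $\int_{I_j} f$, change variables via $t=l_j(u)$, integrate, and solve the resulting linear equation for $d_j$, with reversibility giving both directions. Your additional remarks — that $s_j\circ f$ is Riemann integrable and that \eqref{hcondn} is an implicit consistency condition since $f$ itself depends on the $d_i$'s — are sound points the paper leaves tacit, but they do not change the argument.
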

\begin{proof}
From the histopolation condition in Eq.~\eqref{hdef},
\begin{eqnarray*}
a_j(t_N-t_0)y_j 
&=&\int\limits_{t_{j-1}}^{t_j}(c_jl_j^{-1}(t)+\delta_js_j(f(l_j^{-1}(t)))+d_j)dt\\
&=&c_j\int\limits_{I}ta_jdt+\delta_j\int\limits_{I}s_j(f(t))a_jdt+d_ja_j(t_N-t_0)
\end{eqnarray*}
which on integration becomes
\begin{eqnarray*}
a_j(t_N-t_0)y_j&=&\frac{c_ja_j}{2}(t_N^2-t_0^2)+d_ja_j(t_N-t_0)+\delta_ja_j\int\limits_{I}s_j(f(t))dt.
\end{eqnarray*}
This implies $$d_j=\frac{2y_j(t_N-t_0)-c_j(t_N^2-t_0^2)-2\delta_j\int\limits_Is_j(f(t))dt}{2(t_N-t_0)}.$$

Conversely,  $f$ satisfies 
\begin{equation*}
	f(t)=c_jl_j^{-1}(t)+\delta_js_j(f(l_j^{-1}(t)))+d_j, \forall t\in I_j.
\end{equation*}
So,
\begin{equation}
	\int\limits_{I_j}f(t)dt=\delta_ja_j\int\limits_{I}s_j(f(t))dt+a_jc_j\frac{t_N^2-t_0^2}{2}+d_ja_j(t_N-t_0).\label{conversestep}
\end{equation}
Eq.~\eqref{hcondn} in Eq.~\eqref{conversestep} implies that $\int\limits_{I_j}f(t)dt=a_jy_j(t_N-t_0)$.
\end{proof}
\begin{remark}
Instead of taking the coefficients of $s_j$ as constants $\delta_j$, they can be variable functions of $t$, say $\delta_j(t)$ with the condition that $\max\limits_{t\in I}|\delta_j(t)|<1$. In this case, the if and only if condition for the existence of a solution to the histopolation problem is
\begin{equation*}
    d_j=\frac{2y_j(t_N-t_0)-c_j(t_N^2-t_0^2)-2\int\limits_{I}\delta_j(t)s_j(f(t))dt}{2(t_N-t_0)}.
\end{equation*}
\end{remark}

\begin{example}
Let $\{0,\frac{1}{2},1\}$ be the set with the histogram $\{5,6\}$.
Let $l_j:[0,1]\rightarrow[0,1]$ be
\begin{equation*}
    l_1(t)=\frac{1}{2}t ~\text{ and }~
    l_2(t)=\frac{1}{2}t+\frac{1}{2}.
\end{equation*}
Define $F_j$ as
\begin{equation*}
     F_1(t,x)=\frac{1}{2}t+\frac{1}{2}\frac{1}{(1+x)}+d_1 \text{ and } 
    F_2(t,x)=\frac{1}{4}t+\frac{1}{4}\frac{1}{(1+x)}+d_2.
\end{equation*}
   $\frac{1}{1+t}$ is a Rakotch contraction with respect to the function $\frac{t}{1+t}, t\in\mathbb{R_+}$. Theorem~\ref {Rbfpthm} guarantees that the R.B. operator has a fixed point, $f$, in $\mathcal{B}([0,1])$. If the constants $d_1$ and $d_2$ satisfy Eq.~\eqref{hcondn}, then $f$ is a solution to the histopolation problem. 
The explicit expression of the nonlinear histopolation function, $f$, as a self-referential equation is given by
\begin{equation*}
f(t)= 
\begin{cases}
t+\frac{1}{2(1+f(2t))}+\frac{19}{4}-\frac{1}{2}\int\limits_{0}^1\frac{1}{1+f(t)}dt& \text{if } t\in[0,\frac{1}{2})\\
\frac{1}{2}t+\frac{1}{4(1+f(2t-1))}+\frac{45}{8}-\frac{1}{4}\int\limits_0^{1}\frac{1}{1+f(t)}dt& \text{if } t\in[\frac{1}{2},1].
\end{cases}
\end{equation*}
\end{example}
\section{Conclusion}
\noindent The main outcome of this paper is the construction of a nonlinear fractal histopolation function for a data set using an IFS composed of generalised Rakotch contractions. Earlier constructions had conventionally constrained vertical scaling factors to be less than or equal to 1, but this limitation is overcome in the present work. The criteria for the function that solves the histopolation problem are determined for constants and variable functions as vertical scaling factors.
\section{Declarations}
\subsection*{Funding}
\noindent The first author received financial assistance from the Ministry of Education of India in the form of Research Assistantship.
\subsection*{Competing Interests}
\noindent The authors have no competing interests to declare.
\subsection*{Data Availability}
No data is associated with this work.

\bibliographystyle{unsrt}

\end{document}